\documentclass[oneside]{amsart}
\usepackage[utf8]{inputenc}
\usepackage{amsmath,amstext,amsopn,amssymb, amsfonts, amsthm, mathtools}
\usepackage[dvipsnames]{xcolor}
\usepackage{geometry}
\usepackage{bbm}
\usepackage{pdfsync}
\usepackage{tikz}
\usepackage{enumitem}
\usepackage{subfig}
\usepackage{caption}
\usepackage{graphicx}
\usepackage{mathrsfs}
\usepackage[ruled,vlined,algosection]{algorithm2e}
\DeclareMathAlphabet{\mathpzc}{OT1}{pzc}{m}{it}

\usepackage[pdftex,bookmarks,colorlinks,breaklinks]{hyperref}
\definecolor{dullmagenta}{rgb}{0.4,0,0.4}   
\definecolor{darkblue}{rgb}{0,0,0.4}
\definecolor{darkgreen}{rgb}{0,0.4,0}

\hypersetup{
  linkcolor=darkblue,
  citecolor=blue,
  filecolor=dullmagenta,
  urlcolor=darkblue
}

\newtheorem{TheoremLetter}{Theorem}
{}

\newtheorem{definition}{Definition}
\newtheorem*{definition*}{Definition}

\newtheorem{theorem}{Theorem}
\newtheorem*{theorem*}{Theorem}

\newtheorem*{conjecture*}{Conjecture}

\newtheorem*{question*}{Question}

\newtheorem{lemma}[theorem]{Lemma}
\newtheorem*{lemma*}{Lemma}

\newtheorem{proposition}[theorem]{Proposition}

\newtheorem*{corollary*}{Corollary}

\theoremstyle{definition}

\newtheorem*{remark*}{Remark}

\theoremstyle{plain}

\newtheorem*{example*}{Example}

\numberwithin{equation}{section}
\numberwithin{theorem}{section}

\makeatletter
\newcommand{\customlabel}[2]{
   \protected@write \@auxout {}{
     \string \newlabel {#1}{{#2}{\thepage}{#2}{#1}{}} }
   \hypertarget{#1}{#2}
}
\makeatother

\def\XXint#1#2#3{{\setbox0=\hbox{$#1{#2#3}{\int}$}
     \vcenter{\hbox{$#2#3$}}\kern-.5\wd0}}

\newcommand{\meetconst}{\operatorname{Meet}}

\makeindex
\begin{document}

\title{Pairwise meets of antichains in $\mathbb{Z}^d$}
\address{Universidad Aut\'onoma de Madrid}
\author{Guillermo Rey}
\email{guillermo.rey@uam.es}
\subjclass[2020]{05D05, 06A07, 11B75}
\keywords{antichain, pairwise meet, product order, downset, primitive set, greatest common divisor}
\thanks{Research supported in part by grants PID2022-139521NA-I00 and
  RYC2024-051323-I, both funded by MICIU/AEI/10.13039/501100011033.}
\maketitle

\begin{abstract}
  The meet $a \land b$ of two points $a, b \in \mathbb{Z}^d$ is their
  coordinatewise minimum. We show that every finite antichain
  $\mathcal{A} \subseteq \mathbb{Z}^d$ has at least
  $c_d |\mathcal{A}|^{\frac{d}{d-1}}$ distinct pairwise meets, where
  $c_d > 0$ depends only on $d$, and that the exponent $\frac{d}{d-1}$ is best
  possible. As a corollary we obtain an isoperimetric inequality for downsets:
  every finite downset $\mathcal{D} \subseteq \mathbb{Z}^d_{\geq 0}$ satisfies
  $|\mathcal{D}| \geq c_d |\max(\mathcal{D})|^{\frac{d}{d-1}}$, where
  $\max(\mathcal{D})$ is its set of maximal elements. By prime factorization
  the meet bound also yields a lower bound on greatest common divisors: a
  primitive set of $N$ integers supported on at most $d$ primes has at least
  $c_d N^{\frac{d}{d-1}}$ distinct pairwise gcds.
\end{abstract}

\section{Introduction}

The meet of two points in $\mathbb{Z}^d$ is their coordinatewise minimum:
\begin{align*}
  (a \land b)_i = \min(a_i, b_i).
\end{align*}
Throughout, $\mathbb{Z}^d$ carries the \emph{product order}, in which $a \leq b$
means $a_i \leq b_i$ for every $i$.
For a set $\mathcal{A} \subseteq \mathbb{Z}^d$ we write $\mathcal{A} \land
\mathcal{A}$ for its set of pairwise meets:
$\mathcal{A} \land \mathcal{A} := \{a \land b:\, a,b \in \mathcal{A}\}$.
The question we are interested in is: how small can $\mathcal{A} \land
\mathcal{A}$ be in terms of the size of $\mathcal{A}$?

If $\mathcal{A}$ is a \emph{chain}, that is, any two of its elements are
comparable, then the meet of two points is simply the smaller of them and
$\mathcal{A} \land \mathcal{A} = \mathcal{A}$. This makes the problem trivial,
since the meet of two elements of a chain is again one of them. We therefore
turn to the opposite situation, where comparability is forbidden: if
$\mathcal{A}$ is an \emph{antichain} (a set no two distinct elements of which
are comparable), how small can $\mathcal{A} \land \mathcal{A}$ be?

Antichains can nonetheless be very thin, lying for instance on a single
hyperplane, so one might expect them to produce few pairwise meets. Our main
theorem shows that this cannot happen: the meet set always grows like a fixed
power of the size of the antichain.

\begin{TheoremLetter} \label{TheoremA}
  For every $d \geq 2$ there exists a constant $c_d > 0$ such that for every
  finite antichain $\mathcal{A} \subseteq \mathbb{Z}^d$
  \begin{align*}
    |\mathcal{A} \land \mathcal{A}| \geq c_d |\mathcal{A}|^{\frac{d}{d-1}}.
  \end{align*}
\end{TheoremLetter}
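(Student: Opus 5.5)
The plan is induction on $d$, combined with slicing by hyperplanes and a Loomis--Whitney type averaging over the $d$ coordinate directions. Write $n = |\mathcal{A}|$ and $\mathcal{M} = \mathcal{A} \land \mathcal{A}$.

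\emph{Base case.} For $d = 2$, list the antichain as $a^{(1)}, \dots, a^{(n)}$ with strictly increasing first coordinate, so the second coordinate is strictly decreasing. For $i < j$ we get $a^{(i)} \land a^{(j)} = (a^{(i)}_1, a^{(j)}_2)$. These points are pairwise distinct, so $|\mathcal{M}| \geq \binom{n}{2}$, which has exponent $2 = \frac{d}{d-1}$.

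\emph{Inductive step: slicing in one direction.} For $d \geq 3$ fix a direction $k$. Slice $\mathcal{A}$ by the hyperplanes $\{x_k = t\}$ into pieces $\mathcal{A}_t$. Each $\pi_k(\mathcal{A}_t)$ is an antichain in $\mathbb{Z}^{d-1}$, where $\pi_k$ deletes coordinate $k$. A meet $a \land b$ with $a_k = t \leq b_k$ lies in the hyperplane $x_k = t$, and its projection is $\pi_k(a) \land \pi_k(b)$.

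\emph{Strengthened hypothesis.} The meets in hyperplane $t$ therefore contain the full cross set $\pi_k(\mathcal{A}_t) \land \pi_k(\mathcal{A}_{\geq t})$, not only $\pi_k(\mathcal{A}_t) \land \pi_k(\mathcal{A}_t)$. Distinct $t$ give disjoint sets. This suggests inducting on a statement about cross meets, roughly:
\begin{equation*}
|\mathcal{B} \land \mathcal{C}| \gtrsim |\mathcal{B}|^{\frac{d}{d-1}}
\end{equation*}
whenever $\mathcal{B}$ is an antichain and $\mathcal{C} \supseteq \mathcal{B}$ is any finite set, possibly with $|\mathcal{C}|$ also entering the bound. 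I would first settle the precise form of this hypothesis in dimension $2$, where everything is explicit, before running the induction.

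\emph{Dichotomy over directions.} Induction alone gives only $|\mathcal{M}| \geq c_{d-1} \sum_t |\mathcal{A}_t|^{\frac{d-1}{d-2}}$ in each direction $k$. This is strong when slices are large and useless when they are tiny, for instance when every slice is a singleton. So I would combine all $d$ directions.
\begin{itemize}
\item If in some direction the slices are large, say a positive fraction of $\mathcal{A}$ lies in slices of size at least $n^{\frac{d-2}{d-1}}$, the slicewise bound already yields $n^{\frac{d}{d-1}}$.
\item Otherwise every point lies in small slices in all $d$ directions. In that case I would exploit the cross meets. For $a \in \mathcal{A}$, consider the points $a \land b$ as $b$ ranges over $\mathcal{A}$. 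For each $i$, the meets whose $i$-th coordinate equals $a_i$ form a ``staircase'' on the face of the box below $a$.
\end{itemize}

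\emph{Double counting in the second case.} The aim is to double count pairs (meet, direction $i$) and show that each $m \in \mathcal{M}$ is produced, with its $i$-th coordinate inherited from a fixed $a$, by few $a$'s. This uses a Loomis--Whitney or Shearer type inequality applied to the sets of $a$'s realizing $m$ in each direction. Antichain-ness enters through one fact: two distinct elements of $\mathcal{A}$ cannot agree in all coordinates but one. Hence each projection $\pi_i$ is injective on $\mathcal{A}$, and also on the maximal elements of the downset generated by $\mathcal{M}$.

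\emph{Main obstacle.} I expect the hardest step to be this second case. It needs a lower bound on $|\mathcal{M}|$, while Loomis--Whitney naturally gives upper bounds on a set in terms of its projections. So the inequality must be applied to an auxiliary set, probably $\mathcal{A}$ or the incidence set $\{(a,m) : m = a \land b\}$, rather than to $\mathcal{M}$ itself. My first attempt would take, for each $a$, its $d$ ``nearest'' meets (one per direction, each obtained by lowering a single coordinate to the next value present). I would then show that each meet is charged at most $O_d(1)$ times per direction, while $\mathcal{A}$ supplies about $n^{\frac{1}{d-1}}$ such meets per point when slices are small. If this charging fails, the fallback is to strengthen the inductive hypothesis as above so that the slicewise cross-meet sum alone carries the whole bound.
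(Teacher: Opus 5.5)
Several parts of your proposal are sound: the base case, the observation that a meet with $a_k=t\le b_k$ lies in the hyperplane $x_k=t$ (so different $t$ give disjoint meets), and the first branch of your dichotomy. That branch puts a positive fraction of $\mathcal{A}$ in slices of size at least $n^{\frac{d-2}{d-1}}$ and then uses induction in dimension $d-1$ plus a power-mean bound; it is essentially the paper's Proposition~\ref{induction} with $|\mathcal{I}|=1$. (In the base case, count the diagonal meets $a\land a=a$ to get $\binom{n+1}{2}$; otherwise $n=1$ fails.)

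\textbf{The gap is the second branch.} This is where all of the difficulty lies, and you give no argument for it.
\begin{itemize}
\item Your concrete charging scheme, $d$ ``nearest'' meets per point, produces at most $dn$ meets, so it can never reach $n^{\frac{d}{d-1}}$.
\item The assertion that each point supplies about $n^{\frac{1}{d-1}}$ such meets with $O_d(1)$ multiplicity is not derived, and it contradicts ``one per direction''.
\item The Loomis--Whitney/Shearer step is never pinned down: you do not say which set, which projections, or which inequality.
\item The fallback does not rescue this. The hypothesis $|\mathcal{B}\land\mathcal{C}|\gtrsim|\mathcal{B}|^{\frac{d}{d-1}}$ for $\mathcal{C}\supseteq\mathcal{B}$ follows immediately from the theorem itself, because $\mathcal{B}\land\mathcal{C}\supseteq\mathcal{B}\land\mathcal{B}$. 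So it adds no inductive strength, and the version in which $|\mathcal{C}|$ enters is never formulated.
\end{itemize}

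\textbf{The missing idea is the comparison-type pigeonhole.} In the paper's notation, $\pi_{\mathcal{I}}$ keeps the coordinates in $\mathcal{I}$ and $\pi^*_{\mathcal{I}}$ keeps the rest.
\begin{itemize}
\item For $a\neq b$ in an antichain, $T(a,b)=\{i: a_i\le b_i\}$ is a proper nonempty subset of $[d]$. So some type is carried by at least $N^2/2^{d+1}$ ordered pairs. Transposing the pairs if that type has $d-1$ elements, you get $\mathcal{I}$ with $1\le k=|\mathcal{I}|\le d-2$ such that each such pair has $\pi_{\mathcal{I}}(a\land b)=\pi_{\mathcal{I}}(a)$ and $\pi^*_{\mathcal{I}}(a\land b)=\pi^*_{\mathcal{I}}(b)$.
\item This product structure is what turns information about $\mathcal{A}$ into a lower bound on meets. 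Let $\varphi(a)$ be the set of partners of $a$, and let $\mathcal{G}$ be the set of $a$ with $|\varphi(a)|\ge 2^{-d-2}N$, so $M=|\mathcal{G}|\ge 2^{-d-2}N$. If $\mathcal{H}\subseteq\mathcal{G}$ is a set on which $\pi_{\mathcal{I}}$ is injective, then the sets $a\land\varphi(a)$, $a\in\mathcal{H}$, are pairwise disjoint and $|a\land\varphi(a)|=|\pi^*_{\mathcal{I}}(\varphi(a))|$.
\item The paper then splits on two questions: whether $\pi_{\mathcal{I}}(\mathcal{G})$ has more than $M^{\frac{k}{d-1}}$ values, and whether each $\pi^*_{\mathcal{I}}(\varphi(a))$ has at least $N^{\frac{d-k}{d-1}}$ values. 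For $k=1$ the second split is unnecessary, by Lemma~\ref{injective}.
\item Each compressed case is handled by slicing along the fibers of a $k$- or $(d-k)$-coordinate projection and inducting in dimension $d-k$ or $k$. Otherwise the disjoint sum gives $M^{\frac{k}{d-1}}N^{\frac{d-k}{d-1}}\gtrsim N^{\frac{d}{d-1}}$.
\end{itemize}

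Your dichotomy only sees single-coordinate slices. When the dominant type has $k\ge2$ (possible once $d\ge4$), knowing that each coordinate takes $\gtrsim n^{\frac{1}{d-1}}$ values gives only $|\pi_{\mathcal{I}}(\mathcal{G})|\gtrsim n^{\frac{1}{d-1}}$, not the needed $n^{\frac{k}{d-1}}$. So you need both the type reduction and slicing along higher-codimension fibers.
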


The exponent $\frac{d}{d-1}$ is best possible, as shown by the standard simplex.
\begin{example*}
  Fix a dimension $d \geq 2$ and an integer $L \geq 1$, and let
  \begin{align*}
    \mathcal{A} = \{m \in \mathbb{Z}^d_{\geq 0} :\, |m| = L\},
  \end{align*}
  where $|m| = \sum_{i=1}^d m_i$.
  Then $\mathcal{A}$ is an antichain and
  $\mathcal{A} \land \mathcal{A} = \{m \in \mathbb{Z}^d_{\geq 0}:\, |m| \leq L\}$.
  In particular,
  \begin{align*}
    |\mathcal{A} \land \mathcal{A}| \asymp |\mathcal{A}|^{\frac{d}{d-1}},
  \end{align*}
  where $\asymp$ denotes equality up to constant factors depending only on $d$.
\end{example*}

Theorem \ref{TheoremA} has a direct corollary, which can be read as an
isoperimetric inequality for \emph{downsets} in $\mathbb{Z}^d_{\geq 0}$, sets
$\mathcal{D}$ such that $0 \leq x \leq y \in \mathcal{D}$ implies
$x \in \mathcal{D}$.
\begin{TheoremLetter}
  Let $\mathcal{D} \subseteq \mathbb{Z}^d_{\geq 0}$ be a finite downset, and let
  $\max(\mathcal{D})$ be its set of maximal elements. Then
  \begin{align*}
    |\mathcal{D}| \geq c_d |\max(\mathcal{D})|^{\frac{d}{d-1}}.
  \end{align*}
\end{TheoremLetter}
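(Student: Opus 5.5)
The plan is to deduce this directly from Theorem \ref{TheoremA}, applied to the antichain $\mathcal{A} = \max(\mathcal{D})$. Two facts are needed. First, $\max(\mathcal{D})$ is an antichain: if two distinct maximal elements were comparable, say $a < b$, then $a$ would not be maximal. It is also finite, since $\mathcal{D}$ is finite.

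Second, $\mathcal{A} \land \mathcal{A} \subseteq \mathcal{D}$. For $a, b \in \mathcal{A} \subseteq \mathcal{D} \subseteq \mathbb{Z}^d_{\geq 0}$, the meet $a \land b$ has nonnegative coordinates, because each coordinate is the minimum of two nonnegative integers. Moreover $0 \leq a \land b \leq a \in \mathcal{D}$. The downset property then gives $a \land b \in \mathcal{D}$.

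Combining the two facts with Theorem \ref{TheoremA} yields
\begin{align*}
  |\mathcal{D}| \geq |\mathcal{A} \land \mathcal{A}| \geq c_d |\max(\mathcal{D})|^{\frac{d}{d-1}}.
\end{align*}
If $\mathcal{D}$ is empty, then $\max(\mathcal{D})$ is empty and the inequality is trivial.

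There is no real obstacle here. The only points needing care are the nonnegativity of the meet, which is what allows the downset definition (stated relative to $0$) to apply, and checking that the hypotheses of Theorem \ref{TheoremA} hold. The constant $c_d$ is the same one provided by Theorem \ref{TheoremA}.
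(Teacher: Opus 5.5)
Your proof is correct and is exactly the paper's argument: $\max(\mathcal{D})$ is a finite antichain, its pairwise meets lie in $\mathcal{D}$ by the downset property, and Theorem \ref{TheoremA} then gives the bound with the same constant $c_d$. Your remarks on the nonnegativity of $a \land b$ and on the empty case are valid; they only spell out details the paper leaves implicit.
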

Indeed, $\max(\mathcal{D})$ is an antichain, and since $\mathcal{D}$ is a
downset we have $\max(\mathcal{D}) \land \max(\mathcal{D}) \subseteq \mathcal{D}$
(if $a, b \in \mathcal{D}$ then $a \land b \leq a$, so $a \land b \in
\mathcal{D}$); the bound then follows by applying Theorem \ref{TheoremA} to the
antichain $\max(\mathcal{D})$.
The downset bound is reminiscent of shadow inequalities in products of chains,
such as the theorem of Clements and Lindström \cite{clements_lindstrom}
extending that of Kruskal and Katona \cite{kruskal, katona}; we do not pursue it
further, since Theorem \ref{TheoremA} is stronger, $\mathcal{A} \land
\mathcal{A}$ being in general a sparse subset of the downset generated by
$\mathcal{A}$.

Theorem \ref{TheoremA} also has an application in number theory.
Let $P = \{p_1, \dots, p_d\}$ be a set of prime numbers, and let $m$ and $n$ be
integers whose prime factorizations involve only primes in $P$, say
\begin{align*}
  m = \prod_{i=1}^d p_i^{a_i} \quad \text{and} \quad n = \prod_{i=1}^d
  p_i^{b_i}, \qquad a, b \in \mathbb{Z}^d_{\geq 0}.
\end{align*}
Then $\gcd(m,n)$ is the integer whose exponent vector is the meet $a \land b$:
\begin{align*}
  \gcd(m,n) = \prod_{i=1}^d p_i^{(a \land b)_i},
\end{align*}
and $m \mid n$ if and only if $a \leq b$.
One can therefore ask, for a set $A \subseteq \mathbb{N}$, how many distinct
greatest common divisors arise from pairs of its elements. Write
\begin{align*}
  \operatorname{GCD}(A) = \{\gcd(m,n):\, m,n \in A\}.
\end{align*}

As before, $|\operatorname{GCD}(A)|$ admits no good lower bound without further
hypotheses on $A$. The natural restriction is to assume that $A$ is a
\emph{primitive set}, that is, one in which no element divides another. Even so,
a set of $N$ distinct primes is primitive and has
$|\operatorname{GCD}(A)| = N+1$: the only gcds are the primes themselves
and $1$.

We say that a primitive set $A$ is \emph{supported} on a set of primes $P$ if
every element of $A$ has a prime factorization involving only primes in $P$. It
is then natural to ask, as $|A| \to \infty$, how small $|\operatorname{GCD}(A)|$
can be for primitive sets supported on $d$ primes.

Theorem \ref{TheoremA} has the following direct corollary.
\begin{TheoremLetter}
  For every $d \geq 2$ there exists a constant $c_d > 0$ such that every
  primitive set $A$ supported on at most $d$ primes has
  \begin{align*}
    |\operatorname{GCD}(A)| \geq c_d |A|^{\frac{d}{d-1}}.
  \end{align*}
\end{TheoremLetter}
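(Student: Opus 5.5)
The statement is a direct translation of Theorem \ref{TheoremA} through prime factorization, so I will transfer the problem from $\mathbb{N}$ to $\mathbb{Z}^d_{\geq 0}$ and apply the meet bound there. Let $A$ be a finite primitive set supported on a set of primes $P$ with $|P| \leq d$. If $|P| < d$, I enlarge $P$ by adjoining arbitrary further primes until $|P| = d$. This changes nothing, since $A$ is still supported on the enlarged $P$. Write $P = \{p_1, \dots, p_d\}$ and define
\begin{align*}
  \phi : A \to \mathbb{Z}^d_{\geq 0}, \qquad \phi\Bigl(\prod_{i=1}^d p_i^{a_i}\Bigr) = (a_1, \dots, a_d).
\end{align*}
By uniqueness of prime factorization, $\phi$ is well defined and injective, so $\mathcal{A} := \phi(A)$ satisfies $|\mathcal{A}| = |A|$.

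Next I check the two compatibility facts recorded in the introduction.

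\emph{Order.} For $m, n \in A$ we have $m \mid n$ if and only if $\phi(m) \leq \phi(n)$ in the product order. Primitivity of $A$ says that no element divides a distinct element. Hence no two distinct points of $\mathcal{A}$ are comparable, so $\mathcal{A}$ is a finite antichain in $\mathbb{Z}^d$.

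\emph{Meets.} We have $\gcd(m,n) = \prod_i p_i^{(\phi(m) \land \phi(n))_i}$. Let $\psi$ be the injective map $x \mapsto \prod_i p_i^{x_i}$ on $\mathbb{Z}^d_{\geq 0}$. Then $\psi$ carries $\mathcal{A} \land \mathcal{A}$ bijectively onto $\operatorname{GCD}(A)$, so $|\operatorname{GCD}(A)| = |\mathcal{A} \land \mathcal{A}|$.

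Applying Theorem \ref{TheoremA} to $\mathcal{A}$ now gives
\begin{align*}
  |\operatorname{GCD}(A)| = |\mathcal{A} \land \mathcal{A}| \geq c_d |\mathcal{A}|^{\frac{d}{d-1}} = c_d |A|^{\frac{d}{d-1}},
\end{align*}
with the same constant $c_d$ as in Theorem \ref{TheoremA}.

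The only point needing care is the case where $A$ is supported on fewer than $d$ primes. Padding $P$ to exactly $d$ primes sidesteps it, and it avoids having to compare $c_{d'}$ for $d' < d$, in particular the degenerate case $d' = 1$ where the exponent $d'/(d'-1)$ is undefined. The padding works because an antichain lying in a coordinate subspace of $\mathbb{Z}^d$ is still an antichain in $\mathbb{Z}^d$. The empty set is trivial. So there is no real obstacle: all of the mathematical content lies in Theorem \ref{TheoremA}.
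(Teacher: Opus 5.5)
Your proof is correct and matches the paper's argument: the exponent-vector map sends $A$ bijectively onto a finite antichain in $\mathbb{Z}^d_{\geq 0}$ and turns $\gcd$ into $\land$, so $|\operatorname{GCD}(A)| = |\mathcal{A} \land \mathcal{A}|$ and Theorem~\ref{TheoremA} applies. Padding $P$ to exactly $d$ primes usefully makes explicit the ``at most $d$'' case that the paper passes over. The finiteness of $A$ that you assume is in fact automatic, since by Dickson's lemma every antichain in $\mathbb{Z}^d_{\geq 0}$ is finite.
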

Indeed, the exponent-vector map sends $A$ bijectively to a finite antichain
$\mathcal{A} \subseteq \mathbb{Z}^d_{\geq 0}$ under which $\gcd$ becomes $\land$,
so $|\operatorname{GCD}(A)| = |\mathcal{A} \land \mathcal{A}|$ and
Theorem \ref{TheoremA} applies.
The same result holds with $\gcd$ replaced by $\operatorname{lcm}$: the map
$\mathcal{A} \mapsto -\mathcal{A}$ sends antichains to antichains and satisfies
$(-a) \land (-b) = -(a \lor b)$, so applying Theorem \ref{TheoremA} to
$-\mathcal{A}$ bounds the number of distinct pairwise joins, hence of distinct
lcms.

Sets of the form $\mathcal{A} \land \mathcal{A}$, and more generally
$\mathcal{A} \land \mathcal{B}$, have been studied
before. Daykin, Kleitman and West determined in \cite{dkw} the minimum of
$|\mathcal{A} \land \mathcal{B}|$ over subsets of prescribed sizes in a product
of chains (such as $\mathbb{Z}^d$); there the minimizers are order ideals (downsets),
which are as far from antichains as a set can be, and it is precisely the
antichain hypothesis that excludes them and forces the power $\frac{d}{d-1}$.
The exponent, and the shape of our argument, are moreover close to the
antichain projection inequalities of Engel, Mitsis, Pelekis and Reiher
\cite{emtp} and of Janzer \cite{janzerb}, who compare an antichain to its
lower-dimensional coordinate projections. Their setting is in fact
complementary to ours. They work with \emph{weak} antichains, that is, sets
containing no two points $x, y$ with $x_i < y_i$ in every coordinate, and
study how the size of such a set compares with those of its codimension-one
projections $\pi_i^*$. For a \emph{strong} antichain these projections carry no
information: each $\pi_i^*$ is injective (Lemma \ref{injective}), so
$|\pi_i^*(\mathcal{A})| = |\mathcal{A}|$ for every $i$, and the expansion we
exploit is invisible to projection sizes alone. The two problems thus sit on
opposite sides of the same landscape: projection inequalities are interesting
precisely when comparability in individual coordinates is allowed, whereas the
meet set is interesting precisely when it is not. As in their work we induct on the
dimension, slice $\mathcal{A}$ along the fibers of a projection, bound each
fiber by the
inductive hypothesis, and reassemble the estimates with H\"older's
inequality; a technique originating with Loomis and Whitney \cite{lw} and with
Bollob\'as and Thomason \cite{bollobas_thomason}.
We note that the extremal antichain here is the simplex, whereas the gap in
\cite{emtp, janzerb} is minimized by the staircase set
$\{x \in \{0, \dots, L-1\}^d : x_i = 0 \text{ for some } i\}$; the difference
reflects the same shift between the two regimes.

We prove Theorem \ref{TheoremA} by induction on the dimension, the case $d=2$
being an exact count. The inductive step combines two ingredients: a slicing
argument that converts a small coordinate projection of $\mathcal{A}$ into many
meets, and a pigeonhole argument that guarantees a positive fraction of pairs
whose meets are pinned down along a fixed coordinate split. The pigeonhole organizes
pairs by their comparison-type, the set of coordinates in which one point lies
below the other. A simple injectivity property of codimension-one projections
is used repeatedly.

\section{Proof of Theorem \ref{TheoremA}}

For a finite set $S$ we write $|S|$ for its cardinality, and we set
$[d] = \{1, \dots, d\}$.
We call $\mathcal{I} \subseteq [d]$ a \emph{proper} subset if
$\mathcal{I} \neq \emptyset$ and $\mathcal{I} \neq [d]$.
Writing $k = |\mathcal{I}|$, we define two projections
\begin{align*}
  \pi_{\mathcal{I}} &: \mathbb{Z}^d \to \mathbb{Z}^k \\
  \text{and }\pi_{\mathcal{I}}^* &: \mathbb{Z}^d \to \mathbb{Z}^{d-k},
\end{align*}
where $\pi_{\mathcal{I}}$ is the projection that only keeps the coordinates with
indices in $\mathcal{I}$, and $\pi_{\mathcal{I}}^*$ keeps those in the
complement of $\mathcal{I}$. In particular, if
$\mathcal{I} = \{i_1, \dots, i_k\}$ with $i_1 < i_2 < \dots < i_k$ then
\begin{align*}
  \pi_{\mathcal{I}}(x_1, \dots, x_d) = (x_{i_1}, x_{i_2}, \dots, x_{i_k}).
\end{align*}
When $\mathcal{I}$ consists of a single index $i$, we abbreviate
$\pi_{\{i\}}$ by $\pi_i$.

We begin with a simple lemma which will be useful in the proof.
\begin{lemma} \label{injective}
  Let $\mathcal{A} \subseteq \mathbb{Z}^d$ be an antichain. Then for every
  $1 \leq i \leq d$ the projection $\pi_{i}^*$ is injective.
\end{lemma}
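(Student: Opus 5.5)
The plan is a direct argument by contradiction, restricting $\pi_i^*$ to $\mathcal{A}$. Fix $i \in [d]$ and suppose $a, b \in \mathcal{A}$ satisfy $\pi_i^*(a) = \pi_i^*(b)$. We want to show that $a = b$.

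First, equality of the projections means that $a_j = b_j$ for every $j \neq i$. So $a$ and $b$ can differ only in the $i$-th coordinate.

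Second, the integers $a_i$ and $b_i$ are always comparable. Without loss of generality $a_i \leq b_i$. Combined with $a_j = b_j$ for $j \neq i$, this gives $a_j \leq b_j$ for all $j$, that is, $a \leq b$ in the product order.

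Third, since $\mathcal{A}$ is an antichain, two comparable elements of it must coincide. Hence $a = b$, and $\pi_i^*$ is injective on $\mathcal{A}$.

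None of these steps is a real obstacle. The only point needing care is reading the statement correctly: injectivity is meant for the restriction of $\pi_i^*$ to $\mathcal{A}$, not for the map on all of $\mathbb{Z}^d$. We also use the antichain definition in the form ``no two distinct elements are comparable.''
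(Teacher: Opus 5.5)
Your proof is correct and follows essentially the same argument as the paper: equal $\pi_i^*$-projections force agreement off coordinate $i$, the $i$-th coordinates are comparable (say $a_i \leq b_i$), hence $a \leq b$, and the antichain property gives $a = b$. Your remark that injectivity refers to the restriction of $\pi_i^*$ to $\mathcal{A}$ is also how the paper uses the lemma.
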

\begin{proof}
  Suppose there are two elements $a,b \in \mathcal{A}$ with
  $\pi_i^*(a) = \pi_i^*(b)$.
  Their $i$-th coordinates must be ordered, so we assume without loss of
  generality that $a_i \leq b_i$. But then we have $a \leq b$ since all the
  other coordinates are equal, which implies $a = b$ since $\mathcal{A}$ is an
  antichain.
\end{proof}

\begin{definition}
  For a fixed dimension $d \geq 2$, define $\meetconst(d)$ as the largest
  constant $c$ such that for every finite antichain
  $\mathcal{A} \subseteq \mathbb{Z}^d$
  \begin{align*}
    |\mathcal{A} \land \mathcal{A}| \geq c|\mathcal{A}|^{\frac{d}{d-1}}.
  \end{align*}
\end{definition}
In two dimensions the situation is easier and we can prove
\begin{proposition} \label{dim2_prop}
  Let $\mathcal{A} \subseteq \mathbb{Z}^2$ be an antichain with $N$ elements.
  Then
  \begin{align} \label{dim2_easy}
    |\mathcal{A} \land \mathcal{A}| = \binom{N+1}{2}.
  \end{align}
\end{proposition}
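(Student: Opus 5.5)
In two dimensions an antichain is totally ordered in a particularly rigid way, and the plan is to exploit this to show that the map from unordered pairs (with repetition) to meets is injective. The count $\binom{N+1}{2}$ is exactly the number of pairs $\{a,b\}$ with $a,b\in\mathcal{A}$, allowing $a=b$. So it suffices to show that distinct such pairs give distinct meets.

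First I order the antichain. By Lemma \ref{injective} the first coordinates of the elements of $\mathcal{A}$ are pairwise distinct. So I list $\mathcal{A}=\{a^1,\dots,a^N\}$ with $a^1_1<a^2_1<\dots<a^N_1$. Since no two elements are comparable, the second coordinates must then be strictly decreasing: $a^1_2>a^2_2>\dots>a^N_2$. Otherwise some $a^j\le a^k$ with $j<k$, which is impossible.

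Next I compute the meets. For $j\le k$,
\begin{align*}
  a^j\land a^k=(a^j_1,\,a^k_2),
\end{align*}
and the case $j=k$ gives $a^j$ itself. The map $(j,k)\mapsto (a^j_1,a^k_2)$ on pairs $j\le k$ is injective, because the first coordinate determines $j$ and the second determines $k$. Hence the meets are in bijection with the pairs $j\le k$, of which there are $N+(N-1)+\dots+1=\binom{N+1}{2}$.

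I expect no real obstacle here. The only point needing care is the monotonicity step: noting that the strictness in both coordinates comes from the antichain property together with Lemma \ref{injective}.
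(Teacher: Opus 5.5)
Your proposal is correct and follows the paper's proof essentially step for step. You sort by the first coordinate (distinct by Lemma \ref{injective}), deduce that the second coordinates strictly decrease, note that $a^j \land a^k = (a^j_1, a^k_2)$ for $j \le k$, and count the resulting points. Your remark that the first coordinate recovers $j$ and the second recovers $k$ states the injectivity slightly more explicitly than the paper, which simply displays the triangular array of meets.
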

In particular, $\meetconst(2) = \frac{1}{2}$.
\begin{proof}
  To see \eqref{dim2_easy}, enumerate the points in
  $\mathcal{A}$ by $\{p_1, \dots, p_N\}$.
  Let $p_i = (x_i,y_i)$, and assume without loss of generality that $\{x_i\}$ is
  monotone increasing. By Lemma \ref{injective} the sequence $\{x_i\}$ is
  strictly increasing and, using the antichain condition, $\{y_i\}$ is strictly
  decreasing.

  Now, observe that if $i \leq j$ then $p_i \land p_j = (x_i, y_j)$, therefore
  \begin{align*} \mathcal{A} \land \mathcal{A} = \{
      & (x_1, y_1), (x_1, y_2), (x_1, y_3), \ldots, (x_1, y_N), \\
      & \phantom{(x_1, y_1),{}} (x_2, y_2), (x_2, y_3), \ldots, (x_2, y_N), \\
      & \phantom{(x_1, y_1), (x_1, y_2),{}} (x_3, y_3), \ldots, (x_3, y_N), \\
      & \phantom{(x_1, y_1), (x_1, y_2), (x_1, y_3),{}} \ddots \\
      & \phantom{(x_1, y_1), (x_1, y_2), (x_1, y_3)\ldots,{}} (x_N, y_N) \} \\
    &= \{(x_i, y_j): 1 \leq i \leq j \leq N\}.
  \end{align*}
  This implies
  \begin{align*}
    |\mathcal{A} \land \mathcal{A}| = \sum_{m=1}^N m = \binom{N+1}{2}.
  \end{align*}
\end{proof}

The next proposition allows us to guarantee many different meets whenever
the projections along some index set are sufficiently ``compressed''.
This will be one of the main inductive tools in the argument.
\begin{proposition} \label{induction}
  Let $\mathcal{A}$ be a finite antichain in $\mathbb{Z}^d$ with $d \geq 3$.
  Suppose there is an index set $\mathcal{I} \subseteq [d]$ with
  $1 \leq |\mathcal{I}| \leq d-2$ and a constant $C$ such that
  $|\pi_{\mathcal{I}}(\mathcal{A})| \leq C|\mathcal{A}|^{\frac{k}{d-1}}$, where
  $k = |\mathcal{I}|$. Then
  \begin{align*}
    |\mathcal{A} \land \mathcal{A}| \geq
    C^{\frac{-1}{d-k-1}} \meetconst(d-k) |\mathcal{A}|^{\frac{d}{d-1}}.
  \end{align*}
\end{proposition}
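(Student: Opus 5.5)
Slice $\mathcal{A}$ along the fibers of $\pi_{\mathcal{I}}$. For each $y \in \pi_{\mathcal{I}}(\mathcal{A})$ let
\begin{align*}
  \mathcal{A}_y = \{a \in \mathcal{A} :\, \pi_{\mathcal{I}}(a) = y\}, \qquad N_y = |\mathcal{A}_y|,
\end{align*}
so that $\sum_y N_y = |\mathcal{A}| =: N$, and the number of nonempty fibers is $M := |\pi_{\mathcal{I}}(\mathcal{A})| \leq C N^{\frac{k}{d-1}}$.

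The first observation is that $\pi_{\mathcal{I}}^*$ restricted to a fiber $\mathcal{A}_y$ is injective and its image is an antichain in $\mathbb{Z}^{d-k}$. Indeed, two points of $\mathcal{A}_y$ agree on the coordinates in $\mathcal{I}$, so comparability of their $\pi_{\mathcal{I}}^*$-images would give comparability in $\mathbb{Z}^d$. Since $d-k \geq 2$, the definition of $\meetconst(d-k)$ then yields
\begin{align*}
  |\pi_{\mathcal{I}}^*(\mathcal{A}_y) \land \pi_{\mathcal{I}}^*(\mathcal{A}_y)| \geq \meetconst(d-k)\, N_y^{\frac{d-k}{d-k-1}}.
\end{align*}
Moreover, the meet of two points in $\mathcal{A}_y$ has $\pi_{\mathcal{I}}$-coordinates equal to $y$, and $\pi_{\mathcal{I}}^*$ commutes with $\land$. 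Hence the sets $\mathcal{A}_y \land \mathcal{A}_y$ are pairwise disjoint for distinct $y$ (they are distinguished by their $\mathcal{I}$-coordinates), and each has the cardinality displayed above. Summing over $y$ gives
\begin{align*}
  |\mathcal{A} \land \mathcal{A}| \geq \meetconst(d-k) \sum_{y} N_y^{p}, \qquad p = \frac{d-k}{d-k-1} > 1.
\end{align*}

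It remains to lower bound $\sum_y N_y^p$ given $\sum_y N_y = N$ over $M$ fibers. By convexity (H\"older or Jensen),
\begin{align*}
  \sum_y N_y^p \geq M^{1-p} N^p = M^{-\frac{1}{d-k-1}} N^{\frac{d-k}{d-k-1}}.
\end{align*}
Inserting $M \leq C N^{\frac{k}{d-1}}$, the power of $N$ becomes
\begin{align*}
  \frac{d-k}{d-k-1} - \frac{k}{(d-1)(d-k-1)} = \frac{(d-k)(d-1) - k}{(d-1)(d-k-1)} = \frac{d(d-k-1)}{(d-1)(d-k-1)} = \frac{d}{d-1},
\end{align*}
and the constant becomes $C^{-\frac{1}{d-k-1}}\meetconst(d-k)$, which is exactly the claimed bound.

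The only delicate points are the disjointness of the fiber meet sets and the fact that each fiber projects to a genuine antichain in dimension $d-k \geq 2$. This is where the hypothesis $k \leq d-2$ is used: it ensures $\meetconst(d-k)$ is defined and $p$ is finite. Once these are in place, the exponent bookkeeping in the H\"older step is routine.
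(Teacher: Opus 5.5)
Your proof is correct and follows the paper's argument essentially step for step. You slice along the fibers of $\pi_{\mathcal{I}}$, use that each fiber maps injectively via $\pi_{\mathcal{I}}^*$ onto an antichain in $\mathbb{Z}^{d-k}$ whose meet set is disjoint from those of the other fibers, and finish with H\"older (your convexity bound $\sum_y N_y^p \geq M^{1-p}N^p$) and the same exponent bookkeeping.
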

\begin{proof}
  For brevity we let $\pi = \pi_{\mathcal{I}}$ and
  $\pi^* = \pi_{\mathcal{I}}^*$.
  We will also abbreviate the meet set $\mathcal{E} \land \mathcal{E}$ by
  $\mathcal{E}^{\land}$, for any $\mathcal{E} \subseteq \mathbb{Z}^d$.
  Let $\mathcal{P} = \pi(\mathcal{A})$, and for every $u \in \mathcal{P}$
  we define the section
  \begin{align*}
    \mathcal{A}_u = \{a \in \mathcal{A}:\, \pi(a) = u\}.
  \end{align*}
  First, observe that $\pi^*(\mathcal{A}_u)$ is an antichain in
  $\mathbb{Z}^{d-k}$.
  Indeed, let $a,b \in \mathcal{A}_u$. If $\pi^*(a) \leq \pi^*(b)$ then
  $a \leq b$ since $\pi(a) = \pi(b) = u$, so $a = b$ and thus
  $\pi^*(a) = \pi^*(b)$.
  Moreover, since $\pi$ is constant on $\mathcal{A}_u$, the map $\pi^*$ restricts
  to a bijection $\mathcal{A}_u \to \pi^*(\mathcal{A}_u)$ that carries
  $\mathcal{A}_u^{\land}$ onto $(\pi^*(\mathcal{A}_u))^{\land}$; in particular
  $|\mathcal{A}_u^{\land}| = |(\pi^*(\mathcal{A}_u))^{\land}|$ and
  $|\mathcal{A}_u| = |\pi^*(\mathcal{A}_u)|$.

  If $c \in \mathcal{A}_u^{\land}$ then $\pi(c) = u$, therefore for distinct
  $u,v \in \mathcal{P}$, we have
  $\mathcal{A}_u^{\land} \cap \mathcal{A}_v^{\land} = \emptyset$.

  These facts combined mean that
  \begin{align*}
    |\mathcal{A}^{\land}| &\geq
    \sum_{u \in \mathcal{P}} |\mathcal{A}_u^{\land}| \\
    &\geq \meetconst(d-k) \sum_{u \in \mathcal{P}}
    |\mathcal{A}_u|^{\frac{d-k}{d-k-1}}.
  \end{align*}

  Let $N = |\mathcal{A}|$. Writing $(d-k)' = (d-k)/(d-k-1)$ for the conjugate
  exponent of $d-k$, H\"older's inequality gives
  \begin{align*}
    N = \sum_{u \in \mathcal{P}} |\mathcal{A}_u| \leq
    |\mathcal{P}|^{\frac{1}{d-k}} \Big(
    \sum_{u \in \mathcal{P}} |\mathcal{A}_u|^{(d-k)'}
    \Big)^{\frac{1}{(d-k)'}}.
  \end{align*}
  Therefore, reorganizing we arrive at
  \begin{align*}
    \sum_{u \in \mathcal{P}} |\mathcal{A}_u|^{\frac{d-k}{d-k-1}} &\geq \Big(
    N |\mathcal{P}|^{-\frac{1}{d-k}} \Big)^{(d-k)'} \\
    &\geq C^{-\frac{(d-k)'}{d-k}} N^{(d-k)'\big(1-\frac{k}{(d-1)(d-k)}\big)} \\
    &= C^{-\frac{1}{d-k-1}} N^{\frac{d}{d-1}}.
  \end{align*}
\end{proof}

The next proposition provides a large collection of pairs in $\mathcal{A} \times \mathcal{A}$ for which their meets have a fixed form.
\begin{proposition} \label{type_pigeonhole}
  Let $\mathcal{A}$ be an antichain in $\mathbb{Z}^d$ with $d \geq 3$ and $N$
  elements.
  Then there exists an index set $\mathcal{I} \subseteq [d]$ with
  $1 \leq |\mathcal{I}| \leq d-2$ and a set $\mathcal{F} \subseteq
  \mathcal{A} \times \mathcal{A}$ such that
  \begin{align*}
    |\mathcal{F}| \geq \frac{N^2}{2^{d+1}}
  \end{align*}
  and such that every pair $(a,b) \in \mathcal{F}$ satisfies
  \begin{align*}
    \pi_{\mathcal{I}}(a) \leq \pi_{\mathcal{I}}(b) \quad\text{and}\quad
    \pi_{\mathcal{I}}^*(a) \geq \pi_{\mathcal{I}}^*(b).
  \end{align*}
\end{proposition}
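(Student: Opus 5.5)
For each ordered pair $(a,b)$ of distinct elements of $\mathcal{A}$, define its \emph{comparison-type} as
\begin{align*}
  T(a,b) = \{ i \in [d] :\, a_i \leq b_i \}.
\end{align*}
The plan is to pigeonhole over these types and then show that every type actually attained by a distinct pair gives an admissible index set. Taking $\mathcal{I} = T(a,b)$, the pair satisfies by definition $\pi_{\mathcal{I}}(a) \leq \pi_{\mathcal{I}}(b)$ and $\pi_{\mathcal{I}}^*(a) > \pi_{\mathcal{I}}^*(b)$ coordinatewise, so in particular $\pi_{\mathcal{I}}^*(a) \geq \pi_{\mathcal{I}}^*(b)$.

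The first step is to see which types are possible when $a \neq b$. Since $\mathcal{A}$ is an antichain, $a \not\leq b$ forces $T(a,b) \neq [d]$, and $b \not\leq a$ forces some $i$ with $a_i < b_i$, so $T(a,b) \neq \emptyset$. The remaining issue is $|T(a,b)| = d-1$. In that case, by swapping the roles of $a$ and $b$, the pair $(b,a)$ has type
\begin{align*}
  T(b,a) = \{ i :\, b_i \leq a_i \} \supseteq [d] \setminus T(a,b),
\end{align*}
together with any coordinates of equality. Equalities are the nuisance here, so I would work with the strict pattern instead. Set $S(a,b) = \{ i :\, a_i < b_i \}$ and $E(a,b) = \{ i :\, a_i = b_i \}$. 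By Lemma \ref{injective} and the antichain property, $S(a,b)$ and $S(b,a)$ are both nonempty.

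For each unordered pair $\{a,b\}$ with $a \neq b$, exactly one of $(a,b)$ or $(b,a)$ is chosen; this orientation is what produces the admissible $\mathcal{I}$. Both $S(a,b)$ and $S(b,a)$ are nonempty and disjoint, and together with $E(a,b)$ they partition $[d]$. For the oriented pair I take $\mathcal{I} = S(a,b) \cup E(a,b)$, or any set between $S(a,b)$ and $S(a,b) \cup E(a,b)$; its complement must contain $S(b,a)$. To get $|\mathcal{I}| \leq d-2$, I would pick the orientation so that the complement has size at least $2$:
\begin{itemize}
  \item If $|S(b,a)| \geq 2$, orient as $(a,b)$ with $\mathcal{I} = S(a,b) \cup E(a,b)$.
  \item Else if $|S(a,b)| \geq 2$, orient as $(b,a)$.
  \item Otherwise $|S(a,b)| = |S(b,a)| = 1$, and $d \geq 3$ forces $E(a,b) \neq \emptyset$. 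Then take $\mathcal{I} = S(a,b)$, whose complement is $E(a,b) \cup S(b,a)$ of size $d-1 \geq 2$. The coordinates in $E(a,b)$ satisfy $\geq$ trivially, so the pair still satisfies the required inequalities. Moreover $|\mathcal{I}| = 1$, which lies in the allowed range.
\end{itemize}
In the first two cases $\mathcal{I} \supseteq S(\cdot) \neq \emptyset$, so $|\mathcal{I}| \geq 1$ as well.

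This assigns every unordered pair of distinct elements to one of fewer than $2^d$ admissible index sets. There are $\binom{N}{2}$ such pairs, so by pigeonhole some $\mathcal{I}$ receives at least $\binom{N}{2}/2^d$ pairs. Let $\mathcal{F}$ be the corresponding set of oriented pairs. Adding the diagonal pairs $(a,a)$, which trivially satisfy both inequalities for every $\mathcal{I}$, gives
\begin{align*}
  |\mathcal{F}| \geq \frac{N(N-1)}{2^{d+1}} + N \geq \frac{N^2}{2^{d+1}}.
\end{align*}

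The main obstacle is the case analysis guaranteeing $|\mathcal{I}| \leq d-2$: ties in coordinates and the orientation choice must be handled so that the complement always has two coordinates. This is exactly where $d \geq 3$ is used. The counting step itself is routine.
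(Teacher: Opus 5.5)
Your proof is correct and is essentially the paper's argument: you pigeonhole pairs by their comparison pattern, and you reverse the orientation when needed to avoid $|\mathcal{I}| = d-1$, with $d \geq 3$ making a singleton $\mathcal{I}$ admissible. The differences are only organizational. The paper pigeonholes ordered pairs by the non-strict type $T(a,b)$ first, which absorbs ties, and if that type has size $d-1$ it transposes the whole class and takes $\mathcal{I}$ to be the single strict coordinate. You instead orient each unordered pair with an explicit tie case analysis before pigeonholing, and add the diagonal pairs to recover the $N^2/2^{d+1}$ count.
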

\begin{proof}
  We can assume without loss of generality that $N \geq 2$.
  Define the type function by
  \begin{align*}
    T(a,b) = \{i \in [d]:\, a_i \leq b_i\},
  \end{align*}
  defined only for $a \neq b$.
  Since $\mathcal{A}$ is an antichain, $T(a,b)$ is always a proper non-empty
  subset of $[d]$. Thus, there are at most $2^d - 2$ possible different types.

  There are $N(N-1)$ pairs $(a,b) \in \mathcal{A} \times \mathcal{A}$ with
  $a \neq b$, so by the pigeonhole principle there must exist a proper index set
  $\mathcal{T} \subseteq [d]$ and a collection
  \begin{align*}
    \mathcal{P} = \{(a,b) \in \mathcal{A} \times \mathcal{A}:\, a \neq b \text{
      and } T(a,b) = \mathcal{T}\}
  \end{align*}
  such that
  \begin{align*}
    |\mathcal{P}| \geq \frac{N(N-1)}{2^{d} -2} \geq \frac{N^2}{2^{d+1}}.
  \end{align*}

  If $1 \leq |\mathcal{T}| \leq d-2$, then we are done by setting
  $\mathcal{I} = \mathcal{T}$ and $\mathcal{F} = \mathcal{P}$: for
  $(a,b) \in \mathcal{P}$ we have $a_i \leq b_i$ exactly for $i \in \mathcal{T}$,
  so $\pi_{\mathcal{I}}(a) \leq \pi_{\mathcal{I}}(b)$, while $a_i > b_i$ for
  $i \notin \mathcal{I}$, giving $\pi_{\mathcal{I}}^*(a) \geq \pi_{\mathcal{I}}^*(b)$.
  However, it could happen that $|\mathcal{T}| = d-1$. In this case we instead
  set $\mathcal{I} := [d] \setminus \mathcal{T}$ and transpose $\mathcal{P}$,
  \begin{align*}
    \mathcal{F} := \{(b,a):\, (a,b) \in \mathcal{P}\}.
  \end{align*}
  Here $|\mathcal{I}| = 1 \leq d-2$ since $d \geq 3$; for $(a,b) \in \mathcal{P}$
  the single index $i \in \mathcal{I}$ has $a_i > b_i$, while every
  $j \in \mathcal{T}$ has $a_j \leq b_j$, so the swapped pair
  $(b,a) \in \mathcal{F}$ satisfies
  $\pi_{\mathcal{I}}(b) \leq \pi_{\mathcal{I}}(a)$ and
  $\pi_{\mathcal{I}}^*(b) \geq \pi_{\mathcal{I}}^*(a)$.
\end{proof}

The proposition above lets us work with many pairs $(a,b)$ whose meet has a
particularly nice form. Indeed, for the set of pairs $\mathcal{F}$ constructed
above, the meet $a \land b$ is the unique point in $\mathbb{Z}^d$ such that
\begin{align*}
  \pi_{\mathcal{I}}(a \land b) = \pi_{\mathcal{I}}(a) \quad \text{and} \quad
  \pi_{\mathcal{I}}^*(a \land b) = \pi_{\mathcal{I}}^*(b).
\end{align*}
On the coordinates in $\mathcal{I}$ we have
$\pi_{\mathcal{I}}(a) \leq \pi_{\mathcal{I}}(b)$, so the coordinatewise minimum
agrees with $a$; on the remaining coordinates
$\pi_{\mathcal{I}}^*(a) \geq \pi_{\mathcal{I}}^*(b)$, so it agrees with $b$. As
$\mathcal{I}$ and its complement partition the coordinates, these two
projections determine the point.

The next theorem is the main result of this paper. In it, we will track the
constants rather precisely, but we make no claim that these are optimal.
\begin{theorem} \label{theorem_constants}
  For dimensions $d \geq 2$ we have
  \begin{align*}
    \meetconst(d) \geq 2^{-\gamma_d}
  \end{align*}
  where
  \begin{align*}
    \gamma_d = \frac{3d^2 + 15d - 40}{2}.
  \end{align*}
\end{theorem}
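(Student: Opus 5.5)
The plan is to induct on $d$. The base case $d=2$ is Proposition \ref{dim2_prop}: it gives $\meetconst(2)=\frac12\geq 2^{-\gamma_2}$, since $\gamma_2=1$. Note that
\begin{align*}
\gamma_d-\gamma_{d-1}=3d+6 .
\end{align*}
So the target is an inductive step of the form $\meetconst(d)\geq 2^{-(3d+6)}\min_{2\leq j\leq d-1}\meetconst(j)$, where the losses in $d$ come only from the pigeonhole constant $2^{-(d+1)}$ and from a threshold $C$ chosen as a fixed power of $2^{d}$. The same recursion also shows $\gamma_j\leq\gamma_{d-1}$ for $j\leq d-1$, so the minimum is harmless.

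For the inductive step, fix an antichain $\mathcal{A}\subseteq\mathbb{Z}^d$ with $d\geq 3$ and $|\mathcal{A}|=N$. Apply Proposition \ref{type_pigeonhole} to obtain $\mathcal{I}$ with $k=|\mathcal{I}|\in[1,d-2]$ and $\mathcal{F}$ with $|\mathcal{F}|\geq N^2 2^{-d-1}$. By the remark after that proposition, the meet of $(a,b)\in\mathcal{F}$ is determined by the pair $(\pi_{\mathcal{I}}(a),\pi^*_{\mathcal{I}}(b))$, and conversely. Hence
\begin{align*}
|\mathcal{A}\land\mathcal{A}|\geq\bigl|\{(\pi_{\mathcal{I}}(a),\pi^*_{\mathcal{I}}(b)):(a,b)\in\mathcal{F}\}\bigr|.
\end{align*}
Write $\mathcal{A}_u$ for the fibres of $\pi_{\mathcal{I}}$ and $\mathcal{B}_w$ for the fibres of $\pi^*_{\mathcal{I}}$. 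Cauchy--Schwarz then gives
\begin{align*}
|\mathcal{A}\land\mathcal{A}|\geq\frac{|\mathcal{F}|^2}{\bigl(\sum_u|\mathcal{A}_u|^2\bigr)\bigl(\sum_w|\mathcal{B}_w|^2\bigr)}\geq\frac{N^4 2^{-2d-2}}{S\,S^*},
\end{align*}
where $S=\sum_u|\mathcal{A}_u|^2$ and $S^*=\sum_w|\mathcal{B}_w|^2$. The step is then a dichotomy.

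\emph{Case 1: the fibres are concentrated.} Suppose $\sum_u|\mathcal{A}_u|^{\frac{d-k}{d-k-1}}\geq\delta N^{\frac{d}{d-1}}$. The first part of the proof of Proposition \ref{induction} already gives
\begin{align*}
|\mathcal{A}\land\mathcal{A}|\geq\meetconst(d-k)\,\delta N^{\frac{d}{d-1}},
\end{align*}
because the fibres are antichains in $\mathbb{Z}^{d-k}$ with disjoint meet sets. The same holds for the fibres $\mathcal{B}_w$, which are antichains in $\mathbb{Z}^k$ when $k\geq2$. When $k=1$, Lemma \ref{injective} makes $\pi^*_{\mathcal{I}}$ injective, so $S^*=N$.

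\emph{Case 2: the fibres are spread out.} Otherwise the largest fibre $M$ satisfies $M^{\frac{d-k}{d-k-1}}\leq\delta N^{\frac{d}{d-1}}$. Interpolating,
\begin{align*}
S\leq M^{2-\frac{d-k}{d-k-1}}\sum_u|\mathcal{A}_u|^{\frac{d-k}{d-k-1}},
\end{align*}
and likewise for $S^*$, and these are fed into the Cauchy--Schwarz bound. For $d=3$, $k=1$ the exponents match exactly: $S\lesssim N^{3/2}$ and $S^*=N$, giving $N^{3/2}$ meets.

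The main obstacle is this balancing in general. For $k$ near $d/2$, the naive combination of the two fibre bounds loses a power of $N$, so bounding $S$ and $S^*$ separately through the largest fibre is too crude. I would first try two refinements. The first is to bound the quadruple count jointly rather than as a product. Such quadruples are $(a,a',b,b')$ with $\pi_{\mathcal{I}}(a)=\pi_{\mathcal{I}}(a')$ and $\pi^*_{\mathcal{I}}(b)=\pi^*_{\mathcal{I}}(b')$ that additionally satisfy the type constraints, and one would dyadically decompose fibre sizes on both sides so that only pairs of comparable-size fibres contribute. The second is to replace the Cauchy--Schwarz step by the projection-size form of Proposition \ref{induction}. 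Here one first handles the case where $|\pi_{\mathcal{I}}(\mathcal{A})|\leq CN^{\frac{k}{d-1}}$ or $|\pi^*_{\mathcal{I}}(\mathcal{A})|\leq CN^{\frac{d-k}{d-1}}$ directly by that proposition. In the remaining case both projections are large, and one must show that the pairs in $\mathcal{F}$ realise many distinct $(u,w)$ combinations, for instance by restricting to a popular sub-collection of $\mathcal{F}$ on which each $u$ and each $w$ occurs boundedly often.

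Whichever route closes, the choice of $C$ as a power of $2^{d}$ is what should produce the loss $2^{-(3d+6)}$ per dimension. I expect the case where both projections are large to be the crux of the argument.
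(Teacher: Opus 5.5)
Your base case, the recurrence $\gamma_d-\gamma_{d-1}=3d+6$ and the $d=3$ computation are fine, but the proposal does not prove the inductive step. After the dichotomy you note yourself that Case~2 does not close, and you leave the case of large projections as an unresolved crux. This is a genuine gap: the two estimates you actually establish cannot give the exponent, already for $d=4$.

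In $\mathbb{Z}^4$ take an antichain $Y\subseteq\mathbb{Z}^3_{\geq 0}$ with $s\approx N^{6/7}$ elements, and let $\mathcal{A}=\{(0,y):y\in Y\}\cup\{(t,-t,-t,-t):1\leq t\leq N-s\}$. This is an antichain. Every pair not inside the first block is strictly ordered in each coordinate, with coordinate $1$ opposite to the other three. Hence for large $N$ every admissible output of Proposition~\ref{type_pigeonhole} has $\mathcal{I}=\{1\}$. Then $\sum_u|\mathcal{A}_u|^{3/2}\approx N^{9/7}$, while $S\geq s^2\approx N^{12/7}$ and $S^*=N$ give $|\mathcal{F}|^2/(SS^*)\lesssim N^{9/7}$. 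As $\frac{9}{7}<\frac{4}{3}$, no choice of $\delta$ and no combination of your two bounds reaches $N^{4/3}$, although this set has about $N^2/2$ distinct meets.

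The Cauchy--Schwarz bound is swamped by the roughly $sN$ pairs $\bigl((0,y),(t,-t,-t,-t)\bigr)$, which lie over only about $N$ values of $(u,w)$. So replacing $SS^*$ by the exact quadruple count does not help by itself: $\mathcal{F}$ must first be pruned, which your first refinement only gestures at. The second refinement is too weak as stated. A largeness condition on the projections of all of $\mathcal{A}$ can be forced by adjoining $o(N)$ points, so the ``remaining case'' says essentially nothing about $\mathcal{F}$.

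The missing idea is to localize at the first entry of the pair instead of counting $(u,w)$ combinations globally. Let $\varphi(a)=\{b:(a,b)\in\mathcal{F}\}$. Then the meets $a\land b$, $b\in\varphi(a)$, are in bijection with $\pi^*_{\mathcal{I}}(\varphi(a))$, and meets coming from points with different $\pi_{\mathcal{I}}$-values are distinct. Averaging gives a set $\mathcal{G}$ of $M\geq 2^{-d-2}N$ points with $|\varphi(a)|\geq 2^{-d-2}N$.

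If $|\pi_{\mathcal{I}}(\mathcal{G})|\leq M^{\frac{k}{d-1}}$, Proposition~\ref{induction} applied to $\mathcal{G}$ with $C=1$ finishes. Otherwise take $\mathcal{H}\subseteq\mathcal{G}$ with distinct $\pi_{\mathcal{I}}$-values and $|\mathcal{H}|\geq M^{\frac{k}{d-1}}$, so the sets $a\land\varphi(a)$, $a\in\mathcal{H}$, are pairwise disjoint.
\begin{itemize}
\item For $k=1$, Lemma~\ref{injective} gives $|a\land\varphi(a)|=|\varphi(a)|$, and summing finishes. On the example above this already yields order $N^2$ meets.
\item For $k\geq 2$, suppose first that every $a\in\mathcal{H}$ has $|\pi^*_{\mathcal{I}}(\varphi(a))|\geq N^{\frac{d-k}{d-1}}$. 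Then summing gives $|\mathcal{H}|N^{\frac{d-k}{d-1}}\geq 2^{-(d+2)}N^{\frac{d}{d-1}}$.
\item Otherwise a single $\varphi(a)$ is a sub-antichain with $|\varphi(a)|\geq 2^{-d-2}N$ and $|\pi^*_{\mathcal{I}}(\varphi(a))|\leq 2^{\frac{(d+2)(d-k)}{d-1}}|\varphi(a)|^{\frac{d-k}{d-1}}$. Proposition~\ref{induction} with the complementary index set $[d]\setminus\mathcal{I}$ (of size $d-k\leq d-2$) applies to it, giving $\meetconst(k)\,2^{-3(d+2)}N^{\frac{d}{d-1}}$. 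This worst case is exactly where $\gamma_d-\gamma_{d-1}=3(d+2)$ comes from.
\end{itemize}

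The point is that Proposition~\ref{induction} is applied to well-chosen sub-antichains, namely $\mathcal{G}$ and one neighbourhood $\varphi(a)$, rather than to $\mathcal{A}$. So no joint control of how $\mathcal{F}$ spreads over pairs of fibres is ever needed.
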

\begin{proof}
  Recall that in Proposition \ref{dim2_prop} we showed that $\meetconst(2) =
  \frac{1}{2}$. So the $d=2$ case follows by observing that $\gamma_2 = 1$.
  Now fix $d \geq 3$ and assume by induction that $\meetconst(n) \geq
  2^{-\gamma_n}$ for all $2 \leq n < d$.

  Let $\mathcal{A} \subseteq \mathbb{Z}^d$ be an antichain, and set $N =
  |\mathcal{A}|$. We can assume without loss of generality that $N \geq 2$.
  By Proposition \ref{type_pigeonhole} there exists an index set $\mathcal{I}$
  with $1 \leq k \leq d-2$ indices, where $k = |\mathcal{I}|$, and a
  collection $\mathcal{F}$ of pairs
  in $\mathcal{A} \times \mathcal{A}$ such that
  \begin{align*}
    |\mathcal{F}| \geq \frac{N^2}{2^{d+1}},
  \end{align*}
  and such that, for every pair $(a,b) \in \mathcal{F}$, their meet $a \land b$
  is the unique point satisfying
  \begin{align*}
    \pi_{\mathcal{I}}(a \land b) = \pi_{\mathcal{I}}(a) \quad \text{and} \quad
    \pi_{\mathcal{I}}^*(a \land b) = \pi_{\mathcal{I}}^*(b).
  \end{align*}
  Thus, pairs $(a,b), (a',b') \in \mathcal{F}$ will produce different meets
  whenever the corresponding projections differ:
  \begin{align*}
    \pi_{\mathcal{I}}(a) \neq \pi_{\mathcal{I}}(a') \quad\text{or}\quad
    \pi_{\mathcal{I}}^*(b) \neq \pi_{\mathcal{I}}^*(b') \implies
    a \land b \neq a'
    \land b'.
  \end{align*}

  For every $a \in \mathcal{A}$ define the horizontal section through $a$ by
  \begin{align*}
    \varphi(a) = \{b \in \mathcal{A}:\, (a,b) \in \mathcal{F}\}.
  \end{align*}
  Also, define the \emph{good} set
  \begin{align*}
    \mathcal{G} = \{a \in \mathcal{A}:\, |\varphi(a)| \geq 2^{-d-2}N\}.
  \end{align*}
  Let $M = |\mathcal{G}|$, then $M \geq 2^{-d-2}N$. Indeed,
  \begin{align*}
    \frac{N^2}{2^{d+1}} \leq |\mathcal{F}| &= \sum_{a \in \mathcal{A}}
    |\varphi(a)| \\
    &\leq \sum_{a \in \mathcal{G}} |\varphi(a)| +
    \sum_{a \notin \mathcal{G}} |\varphi(a)| \\
    &\leq MN + \frac{N^2}{2^{d+2}}.
  \end{align*}

  We now split the argument into three cases:
  \begin{itemize}
    \item $\mathcal{G}$ has only a few $\pi_{\mathcal{I}}$ projections.
    \item $\mathcal{G}$ has many different $\pi_{\mathcal{I}}$ projections but
      has sections with only a few $\pi_{\mathcal{I}}^*$ projections.
    \item $\mathcal{G}$ has many different $\pi_{\mathcal{I}}$ projections
      with sections having many different $\pi_{\mathcal{I}}^*$ projections.
  \end{itemize}
  Let us begin with the first case:
  \begin{align*}
    |\pi_{\mathcal{I}}(\mathcal{G})| \leq M^{\frac{k}{d-1}}.
  \end{align*}
  Then Proposition \ref{induction} applied to $\mathcal{G}$ with $C = 1$ gives
  \begin{align*}
    |\mathcal{G} \land \mathcal{G}| &\geq \meetconst(d-k)M^{\frac{d}{d-1}} \\
    &\geq 2^{-\gamma_{d-k}} M^{\frac{d}{d-1}}.
  \end{align*}
  Recall that $M \geq 2^{-d-2}N$, thus
  \begin{align*}
    |\mathcal{G} \land \mathcal{G}| \geq 2^{-\gamma_{d-k}}
    2^{-\frac{d(d+2)}{d-1}} N^{\frac{d}{d-1}} \geq 2^{-(\gamma_{d-k} + 2(d+2))}
    N^{\frac{d}{d-1}}.
  \end{align*}
  Observe that $\gamma$ satisfies the recurrence relation
  \begin{align} \label{gamma_rec}
    \gamma_{j} = 3(j+2) + \gamma_{j-1},
  \end{align}
  thus
  \begin{align*}
    \gamma_{d-k} + 2(d+2) \leq \gamma_{d-1} + 3(d+2) = \gamma_d
  \end{align*}
  and, since $\mathcal{G} \subseteq \mathcal{A}$, we arrive at
  $|\mathcal{A} \land \mathcal{A}| \geq |\mathcal{G} \land \mathcal{G}| \geq
  2^{-\gamma_d} N^{\frac{d}{d-1}}$.

  For the second and third cases, we will assume
  \begin{align*}
    |\pi_{\mathcal{I}}(\mathcal{G})| > M^{\frac{k}{d-1}}.
  \end{align*}
  Choose a subcollection $\mathcal{H} \subseteq \mathcal{G}$ on which
  $\pi_{\mathcal{I}}$ is injective; in particular,
  \begin{align*}
    |\mathcal{H}| \geq M^{\frac{k}{d-1}}.
  \end{align*}
  Since $\pi_{\mathcal{I}}$ is injective on $\mathcal{H}$, distinct
  $a \in \mathcal{H}$ have distinct $\pi_{\mathcal{I}}(a)$, so by the implication
  above the sets $a \land \varphi(a)$, $a \in \mathcal{H}$, are pairwise
  disjoint.

  Furthermore, we can assume $k \geq 2$. Otherwise, if $k=1$, then the
  projection $\pi_{\mathcal{I}}^*$ is injective (Lemma \ref{injective}),
  and we can argue as follows
  \begin{align*}
    |\mathcal{A} \land \mathcal{A}| &\geq \sum_{a \in \mathcal{H}} |a \land
    \varphi(a)| = \sum_{a \in \mathcal{H}} |\pi_{\mathcal{I}}^*(\varphi(a))|
    = \sum_{a \in \mathcal{H}} |\varphi(a)| \geq |\mathcal{H}|N2^{-d-2} \\
    &\geq M^{\frac{1}{d-1}} N 2^{-d-2} \geq N^{\frac{d}{d-1}} 2^{-2(d+2)}.
  \end{align*}
  Note that $\gamma_d \geq 2d + 4$ for $d \geq 3$, so
  \begin{align*}
    |\mathcal{A} \land \mathcal{A}| \geq 2^{-\gamma_d} N^{\frac{d}{d-1}}
  \end{align*}
  which is the claim. Therefore, we can continue the proof assuming
  $2 \leq k \leq d-2$. This makes
  \begin{align*}
    \mathcal{J} := [d] \setminus \mathcal{I}
  \end{align*}
  a proper index set with $|\mathcal{J}| = d-k$, where $2 \leq d-k \leq d-2$.

  Suppose now that there exists an $a \in \mathcal{H}$ with
  \begin{align*}
    |\pi_{\mathcal{J}}(\varphi(a))| \leq N^{\frac{d-k}{d-1}}.
  \end{align*}
  Set $\mathcal{E} = \varphi(a)$ and note that, since $\mathcal{H} \subseteq
  \mathcal{G}$, the definition of $\mathcal{G}$ gives $|\mathcal{E}| \geq
  2^{-d-2}N$. In particular
  \begin{align*}
    |\pi_{\mathcal{J}}(\mathcal{E})| \leq
    2^{\frac{(d+2)(d-k)}{d-1}} |\mathcal{E}|^{\frac{d-k}{d-1}}.
  \end{align*}
  Apply Proposition \ref{induction} and we obtain
  \begin{align*}
    |\mathcal{E} \land \mathcal{E}| &\geq \meetconst(k) C^{\frac{-1}{k-1}}
    |\mathcal{E}|^{\frac{d}{d-1}} \\
    &\geq \meetconst(k) C^{\frac{-1}{k-1}} 2^{-2(d+2)} N^{\frac{d}{d-1}},
  \end{align*}
  where $C = 2^{\frac{(d+2)(d-k)}{d-1}}$. Observe that
  \begin{align*}
    \frac{(d+2)(d-k)}{(d-1)(k-1)} \leq \frac{(d+2)(d-2)}{(d-1)(k-1)} \leq
    \frac{d+2}{k-1} \leq d+2,
  \end{align*}
  so
  \begin{align*}
    |\mathcal{E} \land \mathcal{E}| &\geq \meetconst(k) 2^{-3(d+2)}
    N^{\frac{d}{d-1}} \\
    &\geq 2^{-(\gamma_k + 3(d+2))} N^{\frac{d}{d-1}} \\
    &\geq 2^{-(\gamma_{d-1} + 3(d+2))} N^{\frac{d}{d-1}} \\
    &= 2^{-\gamma_{d}} N^{\frac{d}{d-1}},
  \end{align*}
  where we have used the recurrence relation \eqref{gamma_rec} in the last
  step. Since $\mathcal{E} = \varphi(a) \subseteq \mathcal{A}$, this gives
  $|\mathcal{A} \land \mathcal{A}| \geq |\mathcal{E} \land \mathcal{E}| \geq
  2^{-\gamma_d} N^{\frac{d}{d-1}}$.

  Finally, we assume that for every $a \in \mathcal{H}$ we have
  \begin{align*}
    |\pi_{\mathcal{J}}(\varphi(a))| \geq N^{\frac{d-k}{d-1}}.
  \end{align*}
  Then
  \begin{align*}
    |\mathcal{A} \land \mathcal{A}| &\geq \sum_{a \in \mathcal{H}} |a \land
    \varphi(a)| \\
    &\geq \sum_{a \in \mathcal{H}} |\pi_{\mathcal{J}}(\varphi(a))| \\
    &\geq |\mathcal{H}| N^{\frac{d-k}{d-1}}.
  \end{align*}
  Recall that $|\mathcal{H}| \geq M^{\frac{k}{d-1}} \geq 2^{-\frac{(d+2)k}{d-1}}
  N^{\frac{k}{d-1}} \geq 2^{-(d+2)} N^{\frac{k}{d-1}}$. Using once more that
  $\gamma_d \geq d+2$, we arrive at
  \begin{align*}
    |\mathcal{A} \land \mathcal{A}| \geq 2^{-\gamma_d} N^{\frac{d}{d-1}}.
  \end{align*}
  This is the last remaining case, which completes the proof.
\end{proof}
Theorem \ref{TheoremA} follows directly from Theorem \ref{theorem_constants},
with the explicit constant $c_d = 2^{-\gamma_d}$.

\begin{remark*}
  We have made no attempt to optimize the constant in
  Theorem \ref{theorem_constants}, and the exponent $\gamma_d = \Theta(d^2)$ is
  surely far from the truth. On the one hand,
  \begin{align*}
    \meetconst(d) \geq 2^{-\gamma_d},
  \end{align*}
  which decays like $2^{-\Theta(d^2)}$. On the other hand, taking $L \to \infty$
  in the simplex example shows
  \begin{align*}
    \meetconst(d) \leq \frac{\big((d-1)!\big)^{\frac{1}{d-1}}}{d}
    = \big(1 + o(1)\big)\frac{1}{e},
  \end{align*}
  a bounded quantity. The gap between these two bounds is enormous, and
  essentially all of the loss in our argument comes from the type pigeonhole of
  Proposition \ref{type_pigeonhole}, which keeps only a $2^{-d}$ fraction of the
  pairs and is invoked once at each of the $\sim d$ levels of the induction.
  We do not know the answer even to the following qualitative question.
\end{remark*}

\begin{question*}
  Is $\inf_{d \geq 2} \meetconst(d) > 0$? Equivalently, is there an absolute
  constant $c > 0$ such that every finite antichain
  $\mathcal{A} \subseteq \mathbb{Z}^d$ satisfies
  \begin{align*}
    |\mathcal{A} \land \mathcal{A}| \geq c\,|\mathcal{A}|^{\frac{d}{d-1}}
  \end{align*}
  in every dimension $d \geq 2$?
\end{question*}

\bibliography{antichains}
\bibliographystyle{abbrv}

\end{document}